\theoremstyle{theorem}
\newtheorem{theorem}{Theorem}
\newtheorem{lemma}[theorem]{Lemma}
\theoremstyle{definition}
\newtheorem*{remark}{Remark}
\begin{document}

\title{On the tower factorization of integers}
\author{Jean-Marie De Koninck and William Verreault}

\maketitle

\begin{abstract}
Under the fundamental theorem of arithmetic, any integer $n>1$ can be uniquely written as a product of prime powers $p^a$; factoring each exponent $a$ as a  product of prime powers $q^b$, and so on, one will obtain what is called the tower factorization of $n$. Here, given an integer $n>1$, we study its height $h(n)$, that is, the number of ``floors'' in its tower factorization. 
In particular, given a fixed integer $k\geq 1$, we provide a formula for the density of the set of integers $n$ with $h(n)=k$. This allows us to estimate the number of floors that a positive integer will have on average. We also show that there exist arbitrarily long sequences of consecutive integers with arbitrarily large heights.
\end{abstract}

\section{Introduction.}

According to the {\it fundamental theorem of arithmetic}, the factorization of an integer $n\ge 2$ into primes is unique apart from the order of its prime factors, a fact which was already known to Euclid (for a modern treatment, see for instance Theorem 1.16 in the book of Niven, Zuckerman, and Montgomery \cite{nzm}). This result is also called the {\it unique factorization theorem}.

Hence, given an integer $n$, one can write it in a unique manner (called the {\it canonical factorization} of $n$) as
\begin{equation} \label{eq:facto1}
n=p_1^{a_1}p_2^{a_2}\cdots p_r^{a_r},
\end{equation}
where $p_1<p_2<\cdots < p_r$ are primes and $a_1,a_2,\ldots,a_r$ are positive integers.
What if we were to write each exponent $a_i$ which is larger than 1 as a product of primes, and then do the same with the new exponents thus generated, and so on\,? We would then obtain a {\it tower factorization}. For instance, the number $n_1=2\,715\,939\,072$ can be written
$$n_1= 2^8 \cdot 3^9 \cdot 7^2 \cdot 11 = 2^{2^3} \cdot 3^{3^2} \cdot 7^2 \cdot 11,$$
the first of these two representations being the canonical factorization of $n_1$, the second one being the tower factorization of $n_1$. This motivates us to introduce the notion of the {\it height}\, $h(n)$ of an integer $n$,   namely the number of floors in the tower factorization of $n$.
Clearly, $h(n)=1$ if and only if $n$ is square-free, $h(p^2)=2$ for each prime $p$, $h(n_1)=3$ and
$\displaystyle{h(65\,536)=h\Big(2^{2^{2^2}}\Big)=4}$. Interestingly, just as with the canonical factorization, one easily sees that the tower factorization of any given integer $n\geq 2$ is unique.
The concept of tower factorization is not new. It was first introduced in 2014 by Devlin and Gnang \cite{dg} as they studied the density of the set $M(q)$ of all integers whose tower factorization contains the prime $q$. They were motivated by computational considerations on shortest encodings appearing in Gnang, Radziwi\l \l, and Sanna \cite{grs}. 

In \cref{kfloors}, for an integer $k\geq 1$ we obtain a formula for the density of the set of integers $n$ with $h(n)\leq k$ and for the density of the set of integers $n$ with $h(n)=k$. 
We first treat the simpler case $k=2$ (resp. $k=3$) in \cref{2floors} (resp. \cref{3floors}). In \cref{aveheight}, we prove that a positive integer has on average approximately $1.4361$ floors, and in \cref{chinese}, we show that there exist arbitrarily long sequences of consecutive integers having arbitrarily large heights. We end this section with open problems.

\section{The general setup.}
Given a positive integer $k$, let $D_k$ (resp. $C_k$) stand for the set of those positive integers $n$ for which $h(n)=k$ (resp. $h(n)\le k$). Also, let
$$d_k = \lim_{N\to \infty} \frac 1N \#\{n\le N: h(n)=k\} \quad \mbox{and} \quad c_k = \lim_{N\to \infty} \frac 1N \#\{n\le N: h(n)\le k\}$$
be the respective densities (as we will see, which exist) of the sets $D_k$ and $C_k$. Clearly,
\begin{equation} \label{eq:ck-dk}
C_k=D_1\cup D_2 \cup \cdots \cup D_k \quad \mbox{and} \quad c_k=d_1+d_2+\cdots + d_k.
\end{equation}
Since the density of square-free integers exists and is equal to $6/\pi^2$ (see for instance Theorem 8.25 in 
\cite{nzm}), we have that
$$c_1=d_1 = \frac 6{\pi^2} \approx 0.6079271019.$$

For each positive integer $k$, we introduce the characteristic function $\lambda_k$ of the set $C_k$, namely the arithmetic function
\begin{equation} \label{eq:lambdak}
\lambda_k(n) := \left\{ \begin{array}{lll} 1 & \mbox{if} & h(n)\le k,\\
                   0  & \mbox{if} & h(n)\ge k+1. \end{array} \right.
\end{equation} 
Clearly $\lambda_1(n)=\mu^2(n)$, where $\mu$ stands for the M\"obius function defined by $\mu(1)=1$, $\mu(n)=0$ if $p^2\mid n$ for some prime $p$ and $(-1)^{\omega(n)}$ otherwise. (Here $\omega(n)$ stands for the number of distinct prime factors of $n$.)

\section{The set of integers having no more than two floors.} \label{2floors}
Note that the tower factorization of an integer has at most two floors if each exponent $a_i$ in its canonical factorization is square-free, so that 
\begin{equation}  \label{eq:lambdamu}
\lambda_2(n)=\lambda_2(p_1^{a_1}p_2^{a_2}\cdots p_r^{a_r})=\prod_{i=1}^{r}\mu^2(a_i).
\end{equation}
Also observe that 
\begin{equation} \label{eq:recur2}
\lambda_2(n)=\prod_{i=1}^r\lambda_1(a_i).
\end{equation} 
The function $\lambda_2(n)$ is multiplicative, that is, $\lambda_2(mn)=\lambda_2(m)\lambda_2(n)$ if $m$ and $n$ are coprime. Indeed, assume that $m=p_1^{a_1}\cdots p_r^{a_r}$ for some primes $p_1<\cdots <p_r$ and positive integers $a_1,\ldots,a_r$ and that
$n= q_1^{b_1}\cdots q_s^{b_s}$ for some primes $q_1<\cdots <q_s$ and positive integers $b_1,\ldots,b_s$, where $\{p_1,\ldots,p_r\}\cap \{q_1,\ldots,q_s\}=\emptyset$. Then, using \eqref{eq:lambdamu},
\begin{align*}
\lambda_2(mn) &=  \lambda_2(p_1^{a_1}\cdots p_r^{a_r} q_1^{b_1}\cdots q_s^{b_s}) \\
&=\mu^2(a_1) \cdots \mu^2(a_r) \mu^2(b_1) \cdots \mu^2(b_s) \\
&=\lambda_2(m)\lambda_2(n),
\end{align*}
thus proving our claim.
 Therefore, using the fact that $\lambda_2(p^r)=\mu^2(r)$ for all primes $p$ and integers $r\ge 1$, we have the Euler product expansion
\begin{align} \label{eq:Euler2}
    \sum_{n=1}^\infty \frac{\lambda_2(n)}{n^s} = \prod_{p}\Big(1+\sum_{r=1}^\infty \frac{\lambda_2(p^r)}{p^{rs}}\Big)
    =\prod_{p}\Big(1+\sum_{r=1}^\infty \frac{\mu^2(r)}{p^{rs}}\Big)\qquad (s>1),
\end{align}
where the infinite product runs over all prime numbers $p$. Letting
$$\zeta(s): = \sum_{n=1}^\infty \frac 1{n^s} = \prod_p \left( 1 - \frac 1{p^s} \right)^{-1} \qquad (s>1)$$
be the Riemann zeta function, \eqref{eq:Euler2} can be rewritten as
\begin{equation} \label{eq:lambda-2}
\sum_{n=1}^\infty \frac{\lambda_2(n)}{n^s} = \zeta(s)\prod_{p}\Big(1-\frac{1}{p^s}\Big)\Big(1+\sum_{r=1}^\infty \frac{\mu^2(r)}{p^{rs}}\Big) =\zeta(s) F(s) \qquad (s>1),
\end{equation}
say.

We will now use Wintner's theorem, which says that if $f$ and $g$ are two arithmetic functions satisfying
$$\sum_{n=1}^\infty \frac{f(n)}{n^s} = \zeta(s) \sum_{n=1}^\infty \frac{g(n)}{n^s} \qquad (s>1)$$
and if $\displaystyle{\sum_{n=1}^\infty \frac{g(n)}{n^s}}$ converges absolutely at $s=1$, then
$$\frac{1}{N} \sum_{n\leq N} f(n) = \sum_{n=1}^\infty \frac{g(n)}{n} +o(1) \qquad (N\to \infty),$$
where $o(1)$ stands for a function of $N$ tending to $0$ as $N\to\infty$.
For a proof of Wintner's theorem, see Theorem 6.13 in the book of De Koninck and Luca \cite{jm-fl}.

An infinite product $\prod_{k=1}^\infty(1+a_k)$ converges absolutely if $\prod_{k=1}^\infty(1+|a_k|)$ converges. It is known (see for instance \cite[Theorem 2.2.8]{prod}) that this happens if and only if $\sum_{k=1}^\infty |a_k|$ converges.
We can show that the infinite product appearing in \eqref{eq:lambda-2} converges absolutely at $s=1$. Indeed, first observe that
$$ 
F(s)  =  \prod_p \Big( 1 - \frac 1{p^s} \Big)\Big( 1 + \frac 1{p^s} +\sum_{r=2}^\infty \frac{\mu^2(r)}{p^{rs}} \Big) 
  =  \prod_p \Big(  1 + \frac{p^s-1}{p^s} \sum_{r=2}^\infty \frac{\mu^2(r)}{p^{rs}} -\frac 1{p^{2s}}  \Big).
$$
Therefore, by the above remark, we only need to show that if we set 
$$a_p :=  \frac{p^s-1}{p^s} \sum_{r=2}^\infty \frac{\mu^2(r)}{p^{rs}} -\frac 1{p^{2s}},$$ 
then $\sum_p |a_p|$ converges.
Since, for any $s\ge 1$, 
$$\sum_{r=2}^\infty \frac{\mu^2(r)}{p^{rs}} \le \sum_{r=2}^\infty \frac 1{p^{rs}} = \frac 1{p^{2s}} \Big( 1+ \frac 1{p^s} +\frac 1{p^{2s}} + \cdots \Big) =\frac 1{p^s(p^s-1)},$$
we get
$$\sum_p |a_p| \le \sum_p \Big( \frac{p^s-1}{p^s} \cdot \frac 1{p^s(p^s-1)} + \frac 1{p^{2s}}\Big) = 
\sum_p  \frac 2{p^{2s}},$$
which converges at $s=1$, thus proving our claim. 

It follows that we may apply Wintner's theorem and conclude that
\begin{equation} \label{eq:lambda2den}
\frac{1}{N}\sum_{n\leq N}\lambda_2(n) = c_2 + o(1) \qquad (N\to\infty),
\end{equation}
where
\begin{eqnarray}  \label{eq:c2}
 c_2 = F(1) & = & \prod_p \Big(1- \frac 1p \Big) \Big( 1 + \sum_{r=1}^\infty \frac{\mu^2(r)}{p^r} \Big) \nonumber \\
  & = & \prod_p \Big( 1 - \frac 1{p^4} + \frac 1{p^5} - \frac 1{p^8} + \frac 1{p^{10}} + \cdots \Big) 
  \approx 0.9559230262.
\end{eqnarray}
Recall that the constant $c_2$ gives the density of the set of those integers $n$ satisfying $h(n)=1$ or $h(n)=2$. We thus also have
\begin{equation} \label{eq:d2} 
     d_2 = c_2 - \frac 6{\pi^2} \approx 0.3479959243.
\end{equation}

\begin{remark}
Estimate \eqref{eq:lambda2den} is not new. In the study of exponentially square-free numbers, Subbarao \cite{sub} obtained \eqref{eq:lambda2den} with an error term of $O(N^{-1/2})$, and Wu \cite{wu} improved this error term unconditionally. T\'{o}th \cite{toth} further improved it under the Riemann hypothesis.   
\end{remark}

\section{The set of integers having no more than three floors.} \label{3floors}
Given $n$ in the form \eqref{eq:facto1}, it is clear that 
$$\lambda_3(n) = \prod_{i=1}^r \lambda_2(a_i).$$

This function is also multiplicative, so the reasoning used in \cref{2floors} still applies. Namely, using the fact that $\lambda_3(p^r)=\lambda_2(r)$, we have, for $s>1$,
\begin{eqnarray*}
\sum_{n=1}^\infty \frac{\lambda_3(n)}{n^s} & = & \prod_p \Big( 1+ \sum_{r=1}^\infty \frac{\lambda_3(p^r)}{p^{rs}} \Big)
= \zeta(s) \prod_p \Big( 1- \frac 1{p^s} \Big) \Big( 1+ \sum_{r=1}^\infty \frac{\lambda_2(r)}{p^{rs}} \Big)\\
& = & \zeta(s) \prod_p \Big( 1 -\frac 1{p^{16s}} + \frac 1{p^{17s}} - \frac 1{p^{48s}} + \frac 1{p^{49s}} - \frac 1{p^{80s}} - \frac 1{p^{81s}} + \frac 1{p^{82s}} + \cdots \Big).
\end{eqnarray*}
Once again Wintner's theorem implies that
$$
\frac{1}{N}\sum_{n\leq N}\lambda_3(n) = c_3 + o(1)\qquad (N\to \infty),
$$
where
\begin{equation} \label{eq:d}
c_3=\prod_{p}\Big(1-\frac{1}{p}\Big)\Big(1+\sum_{r=1}^\infty \frac{\lambda_2(r)}{p^{r}}\Big) \approx 0.9999923551,
\end{equation}
a number which unsurprisingly is in accordance with the fact that very few integers have at least 4 floors.
Combining \eqref{eq:ck-dk}, \eqref{eq:c2} and \eqref{eq:d}, we get
\begin{equation} \label{eq:d3}
d_3= c_3 -d_1 -d_2 = c_3-c_2 \approx 0.0440693289.
\end{equation}

Let $T_k(x)$ stand for the number of positive integers $n\leq N$ with at least $k$ floors. Using a computer, one obtains the following table of values of $T_3(x)$ and $T_4(x)$ for $x=10^r$, $1\le r \le 8$, which supports our previous results.
\vskip 5pt

\begin{center}
\begin{tabular}{|c||c|c|c|c|c|c|c|c|} \hline
$x=10^r$ & 10 & $10^2$ & $10^3$ & $10^4$ & $10^5$ & $10^6$ & $10^7$ & $10^8$ \\ \hline
$T_3(x)$ & 0  & 4      & 43    & 440     & 4408  & 44\,077  & 440\,760 & 4\,407\,699 \\ \hline
$T_4(x)$ & 0  & 0      & 0    & 0     & 1 & 15  & 152 & 1527 \\ \hline
\end{tabular}
\end{center}

\section{The set of integers having no more than $k$ floors.} \label{kfloors}
We now seek a formula for $c_k$ and $d_k$ for a general $k\geq 2$. Since it is clear from  \eqref{eq:ck-dk} that
$$
d_k=c_k-c_{k-1}\qquad (k\geq 2),
$$
it suffices to obtain a formula for $c_k$.
Recalling the definition of $\lambda_k$ given in \eqref{eq:lambdak},
it is easy to see
that \eqref{eq:recur2} can be generalized to
$$
 \lambda_{k}(n)=\prod_{i=1}^r\lambda_{k-1}(a_i) \qquad (k\geq 2).
$$

Since each $\lambda_k(n)$ is multiplicative, it follows as before that
\begin{align*}
    \sum_{n=1}^\infty \frac{\lambda_k(n)}{n^s} = \zeta(s)\prod_{p}\Big(1-\frac{1}{p^s}\Big)\Big(1+\sum_{r=1}^\infty \frac{\lambda_{k-1}(r)}{p^{rs}}\Big)=\zeta(s)F_k(s) \qquad (s>1),
\end{align*}
say, and that $F_k(s)$ converges at $s=1$. Wintner's theorem then gives
$$
\frac{1}{N}\sum_{n\leq N}\lambda_k(n) = c_{k} + o(1)\qquad (N\to \infty),
$$
where
$$
c_{k}=\prod_{p}\Big(1-\frac{1}{p}\Big)\Big(1+\sum_{r=1}^\infty \frac{\lambda_{k-1}(r)}{p^{r}}\Big) \qquad (k\geq 2).
$$
By definition of $\lambda_k(n)$, this is precisely the density of the set of integers $n$ with ${h(n)\leq k}$.

\section{Average height.} \label{aveheight}
If we pick a positive integer at random, how many floors should one expect its tower factorization to have? This average height depends on the various densities $d_k$. Since a positive integer can have any number of floors, one can prove that
\begin{equation} \label{eq:aveh}
\lim_{N\to\infty}\frac{1}{N}\sum_{n\leq N}h(n)
=\sum_{k=1}^\infty k d_k.
\end{equation}

In order to establish that \eqref{eq:aveh} holds, we need to obtain upper bounds for the densities $d_k$ which will ensure that the series $\displaystyle{\sum_{k=1}^\infty kd_k}$ does indeed converge. To do so, we proceed as follows.

For convenience, given an integer $k\ge 2$, we set $2^{(k)}:=2^{\iddots^2}$, the minimal tower with $k$ floors, so that in particular $2^{(2)}=2^2$ and $2^{(3)}=2^{2^2}$.   We begin by establishing two important inequalities.

\begin{lemma}\label{lem:1}
(i) For any given integer $a\ge 2$, we have that $\displaystyle{\sum_p \frac 1{p^a} <  \frac 2{2^a}}$.\\
(ii) For any given integer $k\ge 2$, we have that $\displaystyle{d_k< \frac 4{2^{(k)}}}$.
\end{lemma}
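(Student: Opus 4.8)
The plan is to establish the two inequalities in turn, deducing (ii) from (i) together with the definition of the tower factorization.

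For part (i), note first that the crude bound $\sum_p p^{-a}\le\sum_{n\ge 2}n^{-a}$ is already too weak at $a=2$, so a little care is needed. The key observation is that for $a\ge 2$ one has $p^a=p^2\cdot p^{a-2}\ge 2^{a-2}\,p^2$, whence $\sum_p p^{-a}\le 2^{2-a}\sum_p p^{-2}$; since $\tfrac{2}{2^a}=\tfrac12\cdot 2^{2-a}$, it suffices to show $\sum_p p^{-2}<\tfrac12$. I would prove this last bound without invoking the value of $\zeta(2)$: isolating $p=2$, using that every other prime is odd, the estimate $n^{-2}<\tfrac1{(n-1)(n+1)}=\tfrac12\big(\tfrac1{n-1}-\tfrac1{n+1}\big)$, and a telescoping sum, one obtains
$$\sum_p\frac1{p^2}\le\frac14+\sum_{\substack{n\ge3\\ n\ \mathrm{odd}}}\frac1{n^2}<\frac14+\frac12\sum_{\substack{n\ge3\\ n\ \mathrm{odd}}}\Big(\frac1{n-1}-\frac1{n+1}\Big)=\frac14+\frac12\cdot\frac12=\frac12.$$

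For part (ii), I would prove by induction on $k\ge 2$ that $h(n)\ge k$ forces $p^{2^{(k-1)}}\mid n$ for some prime $p$. For $k=2$ this is just the statement that a non-squarefree integer is divisible by the square of a prime, together with $2^{(1)}=2$. For the step with $k\ge 3$: by the very definition of the tower factorization one has $h(n)=1+\max_i h(a_i)$, the maximum being taken over those $i$ with $a_i\ge 2$ (read as $0$ if there are none), so $h(n)\ge k$ produces an exponent $a_i\ge 2$ with $h(a_i)\ge k-1\ge 2$; applying the inductive hypothesis to $a_i$ yields a prime $q$ with $q^{2^{(k-2)}}\mid a_i$, whence $a_i\ge q^{2^{(k-2)}}\ge 2^{2^{(k-2)}}=2^{(k-1)}$ and therefore $p_i^{2^{(k-1)}}\mid p_i^{a_i}\mid n$. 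It follows that, for every $N$,
$$\#\{n\le N:\ h(n)=k\}\le\#\{n\le N:\ p^{2^{(k-1)}}\mid n\ \text{for some prime }p\}\le N\sum_p\frac1{p^{2^{(k-1)}}},$$
so that $d_k\le\sum_p p^{-2^{(k-1)}}$; since $2^{(k-1)}\ge 2$, part (i) applied with $a=2^{(k-1)}$ gives $d_k<\tfrac{2}{2^{2^{(k-1)}}}=\tfrac{2}{2^{(k)}}<\tfrac{4}{2^{(k)}}$.

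None of this is deep; the one step that requires genuine attention is the inductive claim that $h(n)\ge k$ forces the divisibility $p^{2^{(k-1)}}\mid n$, for that is precisely what converts a hypothesis on the height into the convergent prime sum driving the bound. I would also note that the constant $4$ in the statement is not sharp: the argument in fact yields $d_k<2/2^{(k)}$ (and the same count bounds $\sum_{j\ge k}d_j$ likewise), the looser form presumably being kept for convenience later on.
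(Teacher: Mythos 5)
Your proof is correct, and it departs from the paper's argument in both parts in ways worth noting.

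For (i), the paper handles $a=2$ by citing the numerical value $\sum_p p^{-2}=0.452\ldots$, and for $a\ge3$ bounds $\zeta(a)-1$ by the integral $\int_2^\infty t^{-a}\,dt$. You instead reduce every $a\ge2$ to $a=2$ via $p^a\ge 2^{a-2}p^2$, then close the loop with a self-contained telescoping argument showing $\sum_p p^{-2}<\tfrac12$. This is more elementary: no appeal to tabulated constants, no integral comparison, and a single uniform argument across all $a\ge2$.

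For (ii), the paper also observes that $h(a)=k-1$ forces $a\ge 2^{(k-1)}$, but then bounds the count by summing $\sum_p p^{-a}$ over \emph{all} $a\ge 2^{(k-1)}$, which is why the geometric tail produces the factor $4$. You instead package the key fact as a clean inductive statement --- $h(n)\ge k$ forces $p^{2^{(k-1)}}\mid n$ for some prime $p$ --- and then apply the union bound directly to the divisibility condition, invoking (i) only at the single exponent $a=2^{(k-1)}$. This avoids the overcounting and gives $d_k<2/2^{(k)}$, as you correctly point out; the paper's looser $4/2^{(k)}$ is still amply sufficient for the convergence of $\sum k\,d_k$ in the next section.

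One small presentational point: the paper defines $2^{(k)}$ only for $k\ge2$, so your base case quietly adopts the convention $2^{(1)}=2$. That is the natural choice and harmless, but it is worth stating explicitly since the induction leans on it.
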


\begin{proof}
It is well-known that $\displaystyle{\sum_p \frac 1{p^2} =0.452\ldots}$ (see for instance page 95 in the book of Finch \cite{Finch}), thus proving $(i)$ in the case of $a=2$. Hence, let $a\ge 3$ and first observe that it is clear that
$$
\sum_p \frac 1{p^a} < \zeta(a)-1.
$$
In light of this last inequality, the proof of part $(i)$ will be complete if we can prove that
\begin{equation} \label{eq:102}
\zeta(a) -1 < \frac 1{2^{a-1}} \quad \mbox{for all }a\ge 3,
\end{equation}
or equivalently that
\begin{equation} \label{eq:103}
\frac 1{3^a} + \frac 1{4^a} + \frac 1{5^a} + \cdots  < \frac 1{2^a} \quad \mbox{for all }a\ge 3.
\end{equation}
It is immediate that for any integer $a\ge 3$,
$$\frac 1{3^a} + \frac 1{4^a} + \frac 1{5^a} + \cdots < \int_2^\infty \frac{dt}{t^a} = \frac 1{(a-1)2^{a-1}}\le \frac 1{2^a},$$
thus proving (\ref{eq:103}), from which (\ref{eq:102}) follows, thereby completing the proof of part $(i)$.

To prove part $(ii)$, we first note that the case $k=2$ is immediate because \eqref{eq:d2} implies that $d_2<1$. For the general case $k\ge 3$, observe that any given $n\in D_k$ (that is, an integer $n$ with $h(n)=k$) can be written in a unique way as $n=p^a\cdot m$, where $p$ is the smallest prime $p$ dividing $n$ for which $p^a\mid\mid n$ with $h(a)=k-1$ (implying that $a\ge 2^{(k-1)}$). We then have, using part $(i)$,
\begin{eqnarray*}
\sum_{n\le N \atop h(n)=k} 1 & \leq & \underset{\substack{p^am\leq N \\ h(a)=k-1}}{\sum_{p}\sum_{m\geq 1}}1
 \leq  N \sum_{a\ge 2^{(k-1)}} \sum_p \frac 1{p^a} < N \sum_{a\ge 2^{(k-1)}}\frac 2{2^a} = N \frac 4{2^{(k)}},
\end{eqnarray*}
thus completing the proof of part $(ii)$.
\end{proof}

Observe that since, trivially, $2^{(k)}\ge 2^{k}$ for all $k\ge 2$, it follows from part $(ii)$ of Lemma \ref{lem:1} that
$$\sum_{k=1}^\infty kd_k = d_1 + \sum_{k=2}^\infty kd_k < 1 +  4 \sum_{k=2}^\infty \frac k{2^{k}}  =1 + 4 \cdot \frac 32  = 7,$$
implying that the series $\displaystyle{ \sum_{k=1}^\infty kd_k }$ does indeed converge.

So, given a large integer $N$ and letting $K=K_N$ be the smallest positive integer for which $h(n)\le K$ for all $n\le N$, we then have 
$$ \frac 1N \sum_{n\le N} h(n)  =  \frac 1N \sum_{k=1}^{K_N} k \sum_{n\le N \atop h(n)=k} 1= \sum_{k=1}^{K_N} k \frac 1N \sum_{n\le N \atop h(n)=k} 1,$$
so that 
$$\lim_{N\to \infty} \frac 1N \sum_{n\le N} h(n) = \sum_{k=1}^\infty kd_k,$$
which proves \eqref{eq:aveh}.

Denoting by $\mathfrak{a}$ the average value of the number of floors of a positive integer, one can round it to four decimal places by only using the values of $d_1$, $d_2$, and $d_3$ obtained above (the other $d_i$'s having no incidence on these four decimals). One gets that $\mathfrak{a} \approx 1.4361$.

\section{Consecutive integers with arbitrarily large heights and various open problems.} \label{chinese}
Tower factorizations raise many interesting questions.
For instance, using a computer one can check
that the first occurrence of three consecutive integers each with height at least 3 is provided by the triplet $n_0$, ${n_0+1}$, $n_0+2$, where
$$n_0=248\,750=2\cdot 5^{2^2} \cdot 199,\quad n_0+1=3^{2^2} \cdot 37 \cdot 83,\quad  n_0+2=2^{2^2}\cdot 7 \cdot 2221.$$
What about four consecutive integers with the same property\,? What about greater heights\,? Perhaps surprisingly, all are possible. Indeed, one can show that given arbitrary integers $\ell\ge 2$ and $k\ge 2$, there exist infinitely many integers $n$ with the property that $h(n+j)\ge k$ for $j=0,1,\ldots,\ell-1$. Indeed, consider the system of the $\ell$ congruences
\begin{eqnarray*}
n & \equiv & 0 \pmod{ 2^{2^{\iddots^2}} },\\  
n+1 & \equiv & 0 \pmod{ 3^{2^{\iddots^2}} },\\
 & \vdots &  \\
n+\ell-1 & \equiv & 0 \pmod{ P_{\ell}^{2^{\iddots^2}} },
\end{eqnarray*}
where $P_{\ell}$ is the $\ell$-th prime and each of the numbers $\displaystyle{p_i^{2^{\iddots^2}}}$ represents a tower of height $k$. Then,
set  $$Q_\ell:=\displaystyle{2^{2^{\iddots^2}}\cdot 3^{2^{\iddots^2}} \cdots \; P_{\ell}^{2^{\iddots^2}}}.$$
 By the Chinese remainder theorem one can claim the existence of a number $n_0< Q_\ell$ for which $h(n_0+j)\ge k$ for ${j=0,1,\ldots,\ell-1}$. Actually, more is true, namely the existence of infinitely many integers $n$ satisfying
$h(n+j)\ge k$ for $j=0,1,\ldots,\ell-1$ by simply considering the numbers $n=n_0+r\cdot Q_\ell$ with $r=0,1,2,\ldots$, thus proving our claim.

Finally, it would be nice to obtain explicit expressions for the densities $c_k$ and $d_k$, $k\geq 3$, instead of the recursive expressions obtained here. It is also an open problem to find an explicit expression for the density of the set $M(q)$ of all integers whose tower factorization contains the prime $q$, which we alluded to in the Introduction. For instance, when $q=2$, we certainly have that this density is at least $0.5$, but in fact it is $\approx 0.577 350$. We refer the reader to \cite{dg} for more information on this problem.

\begin{acknowledgment}{Acknowledgment.}
The authors wish to thank the referees and the editor for their very helpful comments.
\end{acknowledgment}

\begin{biog}
\item[Jean-Marie De Koninck] obtained his Ph.D. in mathematics from Temple University in 1972 under the supervision of Emil Grosswald. After 44 years as a faculty member at Université Laval in Quebec, Canada, he retired in 2016 and is now Professor Emeritus. His main research interest is the multiplicative structure of integers.  He is still involved in math outreach and takes pleasure in swimming and writing books. His latest, co-authored with Nicolas Doyon, is \textit{The Life of Primes in 37 Episodes}.

\begin{affil}
D\'epartement de math\'ematiques et de statistique, Universit\'e Laval, Qu\'ebec G1V 0A6, Canada\\
jmdk@mat.ulaval.ca
\end{affil}

\item[William Verreault] is a master’s student at Université Laval and a soon-to-be Vanier Scholar at the University of Toronto. While his main research interests lie in analysis, he enjoys working on various number theory problems.

\begin{affil}
D\'epartement de math\'ematiques et de statistique, Universit\'e Laval, Qu\'ebec G1V 0A6, Canada\\
william.verreault.2@ulaval.ca
\end{affil}
\end{biog}
\vfill\eject

\end{document}